\begin{document}
 \bibliographystyle{plain}

 \newtheorem{theorem}{Theorem}
 \newtheorem{lemma}{Lemma}
 \newtheorem{corollary}{Corollary}
 \newtheorem{problem}{Problem}
 \newcommand{\mc}{\mathcal}
 \newcommand{\rar}{\rightarrow}
 \newcommand{\Rar}{\Rightarrow}
 \newcommand{\lar}{\leftarrow}
 \newcommand{\mbb}{\mathbb}
 \newcommand{\A}{\mc{A}}
 \newcommand{\B}{\mc{B}}
 \newcommand{\cc}{\mc{C}}
 \newcommand{\D}{\mc{D}}
 \newcommand{\E}{\mc{E}}
 \newcommand{\F}{\mc{F}}
 \newcommand{\G}{\mc{G}}
 \newcommand{\HH}{\mc{H}}
 \newcommand{\I}{\mc{I}}
 \newcommand{\J}{\mc{J}}
 \newcommand{\M}{\mc{M}}
 \newcommand{\nn}{\mc{N}}
 \newcommand{\qq}{\mc{Q}}
 \newcommand{\U}{\mc{U}}
 \newcommand{\X}{\mc{X}}
 \newcommand{\Y}{\mc{Y}}
 \newcommand{\C}{\mathbb{C}}
 \newcommand{\R}{\mathbb{R}}
 \newcommand{\N}{\mathbb{N}}
 \newcommand{\Q}{\mathbb{Q}}
 \newcommand{\Z}{\mathbb{Z}}
 \newcommand{\lf}{\left\lfloor}
 \newcommand{\rf}{\right\rfloor}
 \newcommand{\dx}{\text{\rm d}x}
 \newcommand{\dy}{\text{\rm d}y}
 \newcommand{\SL}{\mathrm{SL}}
 \newcommand{\PSL}{\mathrm{PSL}}
 \newcommand{\GL}{\mathrm{GL}}
 \newcommand{\PGL}{\mathrm{PGL}}
 \newcommand{\proj}{\mathbb{P}}

\parskip=0.5ex

\title{Diophantine approximation and coloring}
\author{Alan~Haynes and Sara~Munday}
\subjclass[2010]{05C15, 11J99}
\thanks{Research supported by EPSRC grants EP/J00149X/1 and EP/L001462/1.}
\address{Department of Mathematics, University of York, UK}
\email{alan.haynes@york.ac.uk, sara.munday@york.ac.uk}

\allowdisplaybreaks


\begin{abstract}
We demonstrate how connections between graph theory and Diophantine approximation can be used in conjunction to give simple and accessible proofs of seemingly difficult results in both subjects.
\end{abstract}

\maketitle

\section{Introduction}
The goal of this paper is to bring to light some recently discovered connections between problems about graph colorings and problems about the approximation of real numbers by rationals. The connections arise as a result of the fact that many statements about the quality of approximation of real numbers can be phrased as problems about the orbits of points in certain spaces (e.g. compact metric spaces) under the action of particular groups. Once these group actions are identified, there is a correspondence between questions about the approximations and questions about the Cayley graph (to be defined below) of the given group. Information on either side of this correspondence gives information about the other. Several authors (cf. \cite{Kat2001}, \cite{RuzTuzVoi2002}, and \cite{PerSch2010}) have used this machinery to transfer information from Diophantine approximation to give upper bounds for the chromatic number of Cayley graphs. Our main result shows that interesting information about Diophantine approximation can also be obtained by going in the other direction. We show how lower bounds for the chromatic number of certain Cayley graphs can be used to give a new proof of the $p$-adic Littlewood Conjecture for quadratic irrationals, Theorem \ref{thm.mixedlit} below, which was previously proved by other authors \cite{MatTeu2004} via completely different means.

The paper is organized as follows. In Section \ref{sec.prelim} we outline the basic definitions and tools we are going to be using, and present that main problem to be solved. In Section \ref{sec.chrnum1} we present a historical overview, with proofs, of how results from Diophantine approximation can be used to give upper bounds for the chromatic number of Cayley graphs. In Section \ref{sec.chrnum2} we present the proof of our main theorem. Finally, in Section \ref{sec.furtherdirecs}, we suggest some open problems and directions for further research.

\section{Preliminaries}\label{sec.prelim}

\subsection{Cayley graphs and chromatic number}

Let $G$ be a group, possibly non-Abelian. For any subset $\A\subseteq G$, we define the {\em Cayley graph of $G$ with respect to $\A$} to be the graph having vertices indexed by the elements of $G$ and with two vertices $g,h\in G$ adjacent if and only if $h^{-1}g\in\A$ or $g^{-1}h\in\A$. For example, if we let our group be the additive group of integers and first choose $\mathcal{A}$ to be the subgroup $2\Z$, the Cayley graph obtained in that case is such that the vertex indexed by $k$ is adjacent to that indexed by $\ell$ if and only if $k\equiv \ell$ mod $2$. On the other hand, if instead we let $\mathcal{A}$ be the set $\{1, 2\}$, the Cayley graph of $G$ with respect to $\mathcal{A}$ is the graph with the vertex indexed by the integer $n$ adjacent to each of those indexed by $n-1, n+1, n-2$ and $n+2$.

A graph is said to be {\em properly colored} if each vertex is assigned a color such that no two adjacent vertices are the same color. Then, the {\em chromatic number} of a graph is the minimum number of colors (possibly infinite) needed to properly color the graph. We will denote the chromatic number of the Cayley graph of $G$ with respect to $\A$ by $\chi (G,\A).$ Of course, any finite graph has finite chromatic number. For the two examples of Cayley graphs given above, one immediately verifies that the chromatic number of the first is infinite, whereas the second has chromatic number 3. It is straightforward to see that the chromatic number of a complete graph with $n$ vertices is equal to $n$. From this it follows that if a graph $G$ contains a complete subgraph with $n$ vertices then $\chi (G)\ge n$.

\subsection{Diophantine approximation}
Suppose that $\alpha\in [0,1)$ is an irrational number and let $\alpha=[a_1,a_2,\ldots]$, where each $a_i$ is a positive integer, be its simple continued fraction expansion, with principal convergents denoted by $p_k/q_k,~k\ge 1$. In other words,
\[\alpha=[a_1, a_2, a_3, \ldots]:= \frac{1}{a_1 + \frac{1}{a_2+\frac{1}{a_3 + \cdots}}},\]
and $p_k/q_k=[a_1, \ldots, a_k]$. It is useful to keep in mind the recurrence relations that define the convergents: we have, for each $n\in\N$, that \[p_{n+1}= a_{n+1} p_{n} + p_{n-1}\  \text{ and }\  q_{n+1} =a_{n+1} q_{n} + q_{n-1},\]
where we define $p_0:=0$ and $q_0:=1$.
These relations can be proved by induction (see, for instance \cite{cf}).  Now, for each $k\in\N$ let $A_k\in \GL_2(\Z)$ be defined by
\begin{equation}\label{eqn.cf1}
A_k:=\left(\begin{array}{cc}0 & 1 \\ 1 & a_{k} \\ \end{array}\right),
\end{equation}
and note that, by the recurrence relation defining the convergents,
\begin{equation}\label{eqn.cf2}
A_k\cdots A_1\genfrac{(}{)}{0pt}{}{0}{1}=\genfrac{(}{)}{0pt}{}{q_{k-1}}{q_{k}}.
\end{equation}
One of the most important facts about continued fractions is that the principal convergents give excellent approximations to $\alpha$. It is not difficult to prove that for any $k\in\N$,
\begin{equation}\label{eqn.cf0}
\left|\alpha-\frac{p_k}{q_k}\right|\le\frac{1}{q_kq_{k+1}}.
\end{equation}
Multiplying through by $q_k^2,$ this implies that
\[q_k|q_k\alpha-p_k|\le 1,\]
and, with the notation $\|\cdot\|$ denoting the distance to the nearest integer,  it follows from the preceding inequality that for any irrational $\alpha$ there are infinitely many integers $n$ satisfying
\begin{equation}\label{eqn.weakDirichlet}
n\|n\alpha\|\le 1.
\end{equation}
This is a slightly weakened form of Dirichlet's Theorem in Diophantine approximation (see \cite{cf}). It is natural to inquire as to whether or not this conclusion can be improved. It turns out that the answer is yes, but not by much, if we still want a result which holds for all irrational numbers. It is a theorem of Hurwitz that for any irrational $\alpha$ there are infinitely many integers $n$ for which
\begin{equation*}
n\|n\alpha\|\le 1/\sqrt{5},
\end{equation*}
and the constant $1/\sqrt{5}$ cannot in general be replaced by anything smaller. If we were willing to throw out a set of real numbers of Lebesgue measure zero, further improvements to Hurwitz's Theorem would lead us into the maze of the metric theory of Diophantine approximation, where we would encounter the Borel-Bernstein Theorem, Khintchine's Theorem, and the Duffin-Schaeffer Conjecture. However this is not our present goal, so we refer the reader interested in the metric theory to \cite{Har1998}, and we choose instead to steer toward the topic of multiplicative approximation.

There is a famous conjecture in Diophantine approximation called the Littlewood Conjecture, which states that for all pairs of real numbers $\alpha$ and $\beta$, we have that
\begin{equation}\label{eqn.littconj}
\inf_{n\in\N}n\|n\alpha\|\|n\beta\|=0.
\end{equation}
If this conjecture were true, it would assert that with the help of an extra factor $\|n\beta\|$, the quantity on the left hand side of (\ref{eqn.weakDirichlet}) would become arbitrarily small infinitely often. This is a statement about how well pairs of real numbers can be simultaneously approximated (in a multiplicative sense) by rational numbers with the same denominator. Although there are many partial results, the Littlewood Conjecture remains an open problem. We point out for later reference that (\ref{eqn.littconj}) is not even known to hold in some seemingly simple cases, for example when $\alpha$ and $\beta$ are quadratic irrationals (real roots of irreducible quadratic polynomials over $\Q$) which are linearly independent over $\Q$ (for instance, if $\alpha=\sqrt{2}$ and $\beta=\sqrt{3}$).

More recently, a variant of the Littlewood Conjecture has been proposed by de Mathan and Teuli\'{e} \cite{MatTeu2004}, in which the factor $\|n\beta\|$ is replaced by the {\em $p$-adic absolute value} of $n$ for some prime $p$. The $p$-adic absolute value of $n\in\Z$, which is denoted by $|n|_p$, is equal to $0$ if $n=0$ and is equal to $p^{-\ell}$ if $n=p^\ell m$ for some non-zero integer $m$ coprime to $p$. For the interested reader, more can be found about the $p$-adic absolute values in Chapter 1 of \cite{Koblitz}, but for our purposes, since we will only be working with integers, this is all we are going to need to know. As a further guide to intuition notice that the statement
\[|n|_p\le p^{-k}\]
is equivalent to the statement
\[n=0~\mathrm{mod}~p^k.\]
We leave it to the reader to verify that if $m, n\in\Z$, then
\begin{equation}\label{eqn.p-adictriineq}
|n+m|_p\leq \max\{|n|_p, |m|_p\}\leq |n|_p+|m|_p.
\end{equation}
The $p$-adic Littlewood Conjecture is the assertion that, for any real number $\alpha$ and for any prime $p$, we have
\begin{equation}\label{eqn.plittconj}
\inf_{n\in\N}n|n|_p\|n\alpha\|=0.
\end{equation}
There are many similarities between the theory surrounding the Littlewood Conjecture and the theory surrounding the $p$-adic Littlewood Conjecture. However, although both are still open problems, there are some hints that the $p$-adic conjecture may be in some ways easier. One indication of this is the following theorem.
\begin{theorem}\label{thm.mixedlit}
For any prime $p$ and for any quadratic irrational $\alpha\in\R$,
\[\liminf_{n\in\N}n|n|_p\|n\alpha\|\log n<\infty.\]
\end{theorem}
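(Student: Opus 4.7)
Since $\alpha$ is a quadratic irrational, its continued fraction expansion is eventually periodic by Lagrange's theorem. Absorbing the pre-periodic prefix into implicit constants, I assume throughout that $(a_k)$ is purely periodic of period $T$, and set $P := A_T A_{T-1} \cdots A_1 \in \GL_2(\Z)$. A direct induction using (\ref{eqn.cf2}) gives
\[
P^j \genfrac{(}{)}{0pt}{}{0}{1} = \genfrac{(}{)}{0pt}{}{q_{jT-1}}{q_{jT}}
\qquad \text{for all } j \geq 0.
\]
Since $|\det P|=1$, the reduction $\bar P$ of $P$ modulo $p^m$ is invertible, so the $\langle \bar P \rangle$-orbit of $\genfrac{(}{)}{0pt}{}{0}{1}$ is a cycle of some length $d_m$. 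Consequently $\bar P^{d_m}\genfrac{(}{)}{0pt}{}{0}{1} \equiv \genfrac{(}{)}{0pt}{}{0}{1} \pmod{p^m}$, which forces $p^m \mid q_{d_m T - 1}$ and $q_{d_m T} \equiv 1 \pmod{p^m}$.

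The heart of the argument is the quantitative bound $d_m = O(p^m)$, with the implicit constant depending only on $\alpha$ and $p$. I would establish this via a standard Hensel-lifting argument applied to the reduction homomorphism $\GL_2(\Z/p^m\Z) \to \GL_2(\mathbb{F}_p)$. First, the order $N$ of $\bar P$ in $\GL_2(\mathbb{F}_p)$ divides $|\GL_2(\mathbb{F}_p)| = (p^2-1)(p^2-p) < p^4$. Second, writing $\bar P^N = I + pX$ and expanding $(I+pX)^{p^{m-1}}$ via the binomial theorem --- and noting, by Kummer's theorem, that $\binom{p^{m-1}}{k}p^k$ is divisible by $p^m$ for every $k \geq 1$ --- one obtains $\bar P^{Np^{m-1}} \equiv I \pmod{p^m}$. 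Hence the order of $\bar P$ modulo $p^m$ is at most $Np^{m-1} \leq p^{m+3}$, and $d_m$ divides this order.

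With $d_m$ under control, set $n_m := q_{d_m T - 1}$. By construction $|n_m|_p \leq p^{-m}$; (\ref{eqn.cf0}) gives $n_m\|n_m\alpha\| \leq q_{d_mT-1}/q_{d_mT} \leq 1$; and since the partial quotients of $\alpha$ are bounded, say by $M$, one has $\log n_m \leq d_m T \log(M+1) = O(p^m)$. Multiplying,
\[
n_m |n_m|_p \|n_m\alpha\| \log n_m \;\leq\; 1 \cdot p^{-m} \cdot O(p^m) \;=\; O(1),
\]
uniformly in $m$. Since $n_m \geq p^m \to \infty$, this yields $\liminf_{n} n|n|_p\|n\alpha\|\log n < \infty$.

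The main obstacle is the linear-in-$p^m$ bound on $d_m$: the naive pigeonhole estimate $d_m \leq |(\Z/p^m\Z)^2| = p^{2m}$ would be swamped by the $\log n$ factor and yield nothing at all. The Hensel calculation sketched above is the shortest direct route I see, but it is precisely the kind of orbit-length statement that can be repackaged as a chromatic-number lower bound on the Cayley graph of $\GL_2(\Z/p^m\Z)$ with respect to the finite set $\{A_1, \ldots, A_T\}$, which is the approach promised in the paper's introduction.
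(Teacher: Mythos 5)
Your core mechanism---the orbit of $\genfrac{(}{)}{0pt}{}{0}{1}$ under powers of the period matrix $P$ mod $p^m$, with a Hensel--lifting argument to bound the cycle length $d_m$ by $O(p^m)$---is correct and does the same quantitative work as the paper's covering--number bound $\B(N)<2pN$ combined with the complete--subgraph/pigeonhole step of Lemma~\ref{lem.chrnum<->recurrence3}. The binomial estimate $v_p\!\left(\binom{p^{m-1}}{k}p^k\right)\ge m$ is fine, and the rest of the purely periodic argument (divisibility $p^m\mid q_{d_mT-1}$, the convergent bound $q_k\|q_k\alpha\|\le q_k/q_{k+1}\le 1$, and the exponential upper bound $q_k\le (M+1)^k$) is airtight. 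Written this way the proof is arguably more transparent than the paper's, at the cost of losing the stated theme of deriving the result from a chromatic-number lower bound.

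There is, however, a genuine gap: the claim that the pre-periodic prefix can be ``absorbed into implicit constants'' does not hold, and it is not a harmless normalization. If $\alpha=[b_1,\ldots,b_r,\overline{a_1,\ldots,a_s}]$ with $r\ge 1$, then $P^j\genfrac{(}{)}{0pt}{}{0}{1}$ is \emph{not} $\genfrac{(}{)}{0pt}{}{q_{js-1}}{q_{js}}$: it is the convergent vector of the purely periodic tail $\beta=[\overline{a_1,\ldots,a_s}]$, which is a \emph{different} quadratic irrational. The conclusion you reach is thus the theorem for $\beta$, and the theorem for $\beta$ does not transfer to $\alpha$: writing $\alpha=(a\beta+b)/(c\beta+d)$ with $ad-bc=\pm1$, a small value of $n\|n\beta\|$ with $p^m\mid n$ yields a natural candidate $n'=|c\ell+dn|$ (with $\ell$ the nearest integer to $n\beta$) for which $n'\|n'\alpha\|$ stays bounded, but $|n'|_p$ need not be small because $\ell$ carries no $p$-divisibility. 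Alternatively, if you replace $\genfrac{(}{)}{0pt}{}{0}{1}$ by the correct starting vector $B\genfrac{(}{)}{0pt}{}{0}{1}=\genfrac{(}{)}{0pt}{}{q_{r-1}}{q_r}$ (where $B=A_r\cdots A_1$), the Hensel argument gives $q_{d_ms+r-1}\equiv q_{r-1}$ and $q_{d_ms+r}\equiv q_r\ (\mathrm{mod}\ p^m)$ rather than $\equiv 0$, and the naive choice $n_m=q_{d_ms+r-1}-q_{r-1}$ fails because $\|n_m\alpha\|\ge \|q_{r-1}\alpha\|-\|q_{d_ms+r-1}\alpha\|$ is bounded below, so $n_m\|n_m\alpha\|$ blows up. The paper's fix is to take the determinant-type quantity $n_m=|q_r q_{d_ms+r-1}-q_{r-1}q_{d_ms+r}|$ (equivalently $|b_{22}q_{is+r-1}-b_{12}q_{is+r}|$ in their notation): this is $\equiv 0\ (\mathrm{mod}\ p^m)$ by the congruences above, has $\log n_m=O(p^m)$, and satisfies $n_m\|n_m\alpha\|=O(1)$ by the triangle inequality $\|n_m\alpha\|\le q_r\|q_{d_ms+r-1}\alpha\|+q_{r-1}\|q_{d_ms+r}\alpha\|$. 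With that replacement, your Hensel-lifting route goes through for general quadratic irrationals.
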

This theorem was first proved by de Mathan and Teuli\'{e} in \cite{MatTeu2004}, and in Section \ref{sec.chrnum2} we will give our own proof which illustrates the connection between chromatic number of Cayley graphs of certain groups and dynamics or orbits of points in spaces on which the groups act.

At this point, to  conclude this section, let us give an indication of the reason that quadratic irrational numbers have been making an appearance. One thing that makes these numbers special is a theorem due to Lagrange (Theorem 28 in \cite{cf}), which states that the simple continued fraction expansion of an irrational number $\alpha$ is {\em eventually periodic} if and only if $\alpha$ is a quadratic irrational. By this, we mean that a number is a quadratic irrational if and only if its continued fraction expansion has the form $[b_1, \ldots, b_r, \overline{a_{1}, \ldots, a_{s}}]$, where the line over the block $a_{1}, \ldots, a_{s}$ means that this block is repeated infinitely many times.

\section{Bounding chromatic number of Cayley graphs}\label{sec.chrnum1}
In this section we will begin to understand the connections between chromatic number, dynamical systems, and Diophantine approximation. We would like to emphasize that the ideas in this section were originally discovered by other authors. We will give references to their work, but for completeness we also provide proofs of all relevant results. To start, consider the following problem, posed by Erd\H{o}s in 1975.
\begin{problem}\label{prob.erd1}
Suppose that $\{n_k\}_{k\in\N}$ is a sequence of positive integers satisfying the condition
\begin{equation}\label{eqn.lacdef}
\frac{n_{k+1}}{n_k}\ge\lambda>1 \text{ for some }\lambda\text{ and all }k\in\N.
\end{equation}
Does there exist a number $\alpha\in\R/\Z$ such that the collection of points $\{n_k\alpha\}_{k\in\N}$ is not dense in $\R/\Z$?
\end{problem}
For future reference, a sequence of integers satisfying the growth condition (\ref{eqn.lacdef}) is called {\em lacunary}.

It turns out that this problem was solved almost 50 years before it was stated, by Khintchine (see Hilfssatz III in \cite{khin}). The statement proved by Khintchine is actually a little more general, as instead of the lacunarity condition, he proves that if there exists a $\lambda>1$ and $t\in\N$ such that ${n_{k+t}}/{n_k}\ge\lambda>1$ for all $k\in \N$, then there exists a point $\alpha$ for which the collection of points $\{n_k\alpha\}_{k\in\N}$ is not dense in $\R/\Z$.

Problem \ref{prob.erd1} was also settled independently by de Mathan and Pollington \cite{Mat1980,Pol1979}. They proved that if $\{n_k\}\subseteq\N$ is lacunary then the collection of $\alpha$ for which $\{n_k\alpha\}$ is not dense in $\R/\Z$ is uncountable (and in fact that it has Hausdorff dimension $1$). To give some feeling for how their proof goes we prove the following weaker result, which is sufficient both to answer Problem \ref{prob.erd1} and for our application. The proof below can be reasonably easily adapted (by keeping track of the possible number of intervals constructed at each stage) to give the proof of the stronger Hausdorff dimension result.
\begin{lemma}\label{lem.nondenseorbit}
Suppose that $\{n_k\}_{k\in\N}\subseteq\N$ is lacunary. Then there exists an integer $N\in\N$ and a real number $\alpha$ such that
\[\|n_k\alpha\|>N^{-1}~\text{for all}~k\in\N.\]
\end{lemma}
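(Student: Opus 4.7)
The plan is a Cantor-style nested-interval construction inside the complement of the ``bad sets''
\[
\E_k := \{\alpha \in [0,1) : \|n_k\alpha\| \le 1/N\}, \qquad \F_k := [0,1) \setminus \E_k,
\]
for a suitably large integer $N$. Viewed on $\R/\Z$, each $\E_k$ is a disjoint union of $n_k$ closed arcs of length $2/(Nn_k)$ centered at the fractions $j/n_k$, and $\F_k$ is a disjoint union of $n_k$ open arcs of length $(N-2)/(Nn_k)$. I would build a decreasing sequence of closed intervals $I_1 \supseteq I_2 \supseteq \cdots$ with each $I_k$ contained in (the closure of) a single component of $\F_k$. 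Cantor's intersection theorem then yields $\alpha \in \bigcap_k I_k$ satisfying $\|n_k\alpha\| \ge 1/N$ for every $k$, and the lemma follows on replacing $N$ by $N+1$.

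For the inductive step, I take each $I_k$ to be the closure of a full component of $\F_k$, so that $|I_k| = (N-2)/(Nn_k)$. An elementary alignment count shows that $I_k$ contains a full component of $\F_{k+1}$ as soon as $|I_k| \ge (2N-2)/(Nn_{k+1})$, and by lacunarity this in turn reduces to the condition $\lambda(N-2) \ge 2N-2$. This is attainable for all sufficiently large integers $N$ whenever $\lambda > 2$. For the residual case $\lambda \in (1,2]$, I would apply the standard block trick: pick $t \in \N$ with $\lambda^t > 2$, group the indices $\N$ into blocks $\{jt+1,\ldots,(j+1)t\}$, and run the nested construction at the block scale, simultaneously avoiding the $t$ bad sets $\E_{jt+1},\ldots,\E_{(j+1)t}$ within each block by a union-bound argument inside the previous interval. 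The effective lacunarity $n_{(j+1)t+1}/n_{jt+1} \ge \lambda^t > 2$ then supplies the room needed for the block-level induction to continue.

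The main obstacle is the arithmetic bookkeeping for the block version: one has to show that after removing $\bigcup_{k \in \text{block}} \E_k$ from the current interval of length $c/n_{jt+1}$, the longest remaining component has length at least $c/n_{(j+1)t+1}$ for a fixed constant $c > 0$ depending only on $\lambda$, $t$, and $N$. Writing out the measure and counting estimates, this reduces to a quadratic inequality in $c$ whose discriminant is positive once $N$ is chosen sufficiently large compared to $t$. Apart from this computation, the argument is a standard nested-interval construction, and it can be adapted (as the authors indicate) to track the number of surviving intervals at each stage and thereby extract Hausdorff dimension information.
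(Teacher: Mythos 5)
Your overall strategy is the same as the paper's: treat $\lambda>2$ directly by a nested-interval construction, and for $\lambda\in(1,2]$ look ahead over a block of $t$ denominators at once, choosing the largest surviving subinterval at each block step. However, there is a genuine gap in the choice of the block length. You pick $t$ so that $\lambda^t>2$, but this is too weak. When you delete the $t$ bad sets $\E_{jt+1},\ldots,\E_{(j+1)t}$ from the current interval, the complement can be cut into on the order of $t+1$ pieces of roughly equal length, so the longest surviving component is only guaranteed to be about a $1/(t+1)$ fraction of the surviving measure. Carrying this through the bookkeeping you describe, the coefficient of the linear term in your quadratic in $c$ is (up to lower-order terms in $1/N$) $1-(t+1)/\lambda^t$, and making $N$ large does not help its sign: the quadratic has a positive root only if $\lambda^t>t+1$. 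The paper's proof uses precisely this stronger constraint ($K+1<\lambda^K$). For $\lambda$ near $1$, the value of $t$ you would choose from $\lambda^t>2$ is far smaller than the value needed to satisfy $\lambda^t>t+1$; for example with $\lambda=1.1$ one gets $t=8$ from $\lambda^t>2$ but needs $t$ on the order of $40$ for $\lambda^t>t+1$.

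A secondary point worth flagging: to control the \emph{number} of bad arcs (not just their total measure) that land inside the current interval, the paper first normalizes the sequence so that $\lambda\le n_{k+1}/n_k<\lambda(\lambda+1)$ by inserting extra terms. Without some upper bound of this kind on the growth ratios, the count of bad arcs for a single denominator inside the current interval is not bounded in terms of $t$ and $\lambda$ alone, which complicates the piece-counting you invoke. You should either add this normalization or make the counting estimate robust to arbitrarily large individual ratios. Once both of these points are addressed, the argument closes up and is essentially the paper's proof.
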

\begin{proof}
Our proof is a simplified and expanded version of Pollington's \cite{Pol1979}, although keep in mind that his conclusion is much stronger than ours. Before we jump into the full details, first consider how we could argue if $\{n_k\}$ satisfied (\ref{eqn.lacdef}) with $\lambda>2$. In that case, select $N\in\N$ large enough that
\begin{equation}\label{eqn.lemprf1}
\lambda(1-2N^{-1})>2,
\end{equation}
and choose any integer $0\le b_1< n_1$. Then any real number $x$ in the interval
\[I_1:=\left(\frac{b_1+N^{-1}}{n_1},\frac{b_1+1-N^{-1}}{n_1}\right)\]
satisfies the inequality
\[\|n_1x\|>N^{-1}.\]
We are almost finished, because now we can continue inductively, choosing for each $k\ge 1$ an integer $b_k$ and an interval
\[I_k:=\left(\frac{b_k+N^{-1}}{n_k},\frac{b_k+1-N^{-1}}{n_k}\right),\]
with $I_k\subseteq I_{k-1}$. The reason we can guarantee this last condition is that, by the lacunarity of our sequence and by (\ref{eqn.lemprf1}),
\[\left\lfloor n_k\cdot|I_{k-1}|\right\rfloor=\left\lfloor\frac{n_k(1-2N^{-1})}{n_{k-1}}\right\rfloor>2,\]
so each interval $I_{k-1}$ contains at least two fractions with denominator $n_k$. Now since the $I_k$ are nested intervals, their intersection contains a point, which we call $\alpha$. By construction $\alpha\in I_k$ for all $k$, which allows us to conclude that
\[\|n_k\alpha\|>N^{-1}~\text{for all}~k.\]

It is instructive to examine why this proof breaks down in the general case of $\lambda>1$. If $\lambda$ is close to $1$ then, by choosing $N$ large enough and taking $I_1$ as above, we can guarantee that there is a fraction with denominator $n_2$ in $I_1$. However, we can only guarantee the existence of one such fraction, not two as before, so when we come to choose the interval $I_2$ nested inside $I_1$, we are no longer able to ensure that the length will be $(1-2N^{-1})/n_2$, as it was previously, but it may have to be significantly shorter.
If this worst-case scenario continues then eventually we could be forced into a position where our interval $I_k$ is smaller than $2N^{-1}/n_{k+1}$. Then it is game over, because if there is a fraction with denominator $n_{k+1}$ close to the center of $I_k$, we will be forced to throw away all of $I_k$ at the next step.

To get around this problem we will inductively construct our nested intervals by looking ahead at how the fractions corresponding to the next several denominators chop up the interval we are in, and choosing the largest resulting subinterval. There is one technical assumption that we need to make, which is that the integers $n_k$ do not grow at a rate that is faster than lacunary. This does not involve any loss of generality in our proof because if
\[n_{k+1}>\lambda(\lambda+1)n_k,\]
then we can insert the integer $n_k':=\lceil\lambda n_k\rceil$ into our sequence. Since
\[\frac{n_k'}{n_k}\ge \lambda\quad\text{ and }\quad\frac{n_{k+1}}{n_k'}\ge\frac{n_{k+1}}{\lambda n_k+1}\ge \frac{n_{k+1}}{(\lambda+1)n_k}>\lambda,\]
our new sequence still satisfies the lacunarity condition (\ref{eqn.lacdef}). Furthermore, if a real number $\alpha$ satisfies the conclusion of our lemma for the new sequence, then it will also satisfy the conclusion for our original sequence (since it is a subsequence of the new sequence). Therefore, by continuing to insert integers into our sequence where needed we may assume, after relabeling and without loss of generality, that
\begin{equation}\label{eqn.uppandlowbnd}
\lambda\le \frac{n_{k+1}}{n_k}<\lambda(\lambda+1)~ \text{ for all }~k\in\N.
\end{equation}
As we said, this is a harmless technical assumption, which will be used in only one place below.

In this paragraph we focus on the main line of the argument and in the next paragraph we fill in the choices of the parameters involved. Start with an interval $I_1$, chosen to avoid points too close to fractions with denominator $n_1$, and having length $\Delta/n_1$ for some small $\Delta>0$. Then look at fractions with denominators $n_2,n_3,\ldots ,n_{1+K}$, for some integer $K$, to be determined, and around each of these which falls in $I_1$ remove a small interval of radius $N^{-1}/n_1$, where $N$ is also to be determined. If we choose $\Delta$ small enough then we can guarantee that at most one fraction with each denominator $n_2,\ldots , n_{1+K}$ is contained in $I_1$, therefore after removing all of the small intervals centered at these fractions we will be left with a subset $J_1\subseteq I_1$ which is a union of at most $K+1$ intervals and which has measure greater than or equal to
\[|I_1|-\frac{2KN^{-1}}{n_1}=\frac{\Delta-2KN^{-1}}{n_1}.\]
The largest component interval of $J_1$ will have length at least
\begin{eqnarray}\label{*1}\frac{\Delta-2KN^{-1}}{(K+1)n_1},\end{eqnarray}
and assuming that $K$ and $N$ have been chosen large enough (depending upon $\lambda$), we can ensure that this is larger than
\begin{eqnarray}\label{*2}\frac{\Delta}{\lambda^Kn_1}.\end{eqnarray}
Since $n_{1+K}>\lambda^Kn_1,$ this guarantees that we can choose our next subinterval $I_{1+K}$ (we skipped some indices on purpose for notational convenience) to be a subinterval of $J_1$ of length equal to
\[\frac{\Delta}{n_{1+K}}.\]
Finally we can apply the inductive hypothesis and repeat exactly the same argument, with $n_1$ replaced by $n_{1+K}$ and with the same values of $\Delta, K,$ and $N$, to construct a sequence of nested intervals $\{I_{1+jK}\}_{j\in\N}$ with every point $x$ in $I_{1+jK}$, for each $j$, satisfying
\[\|n_kx\|>N^{-1}~\text{ for all}~1\le k\le 1+jK.\]
The intersection of these intervals will be a real number $\alpha$ which satisfies the conclusion of the lemma.

This argument will become a bona fide proof if we can specify all of the parameters involved. This is the end game, but of course if the end game can't be played to our advantage then we're still in trouble. Let us recall the crucial ingredients: we need to be able to choose $\Delta$ sufficiently small that there is at most one fraction with each denominator $\{n_2, n_3, \ldots, n_{1+K}\}$ in $I_1$, and we must choose $K$ (and then $N$) sufficiently large that $(\ref{*1})>(\ref{*2})$.  So, with the second constraint in mind, we begin by choosing $K\in\N$ so that
\[K+1<\lambda^K.\]
Then, to satisfy the first constraint,  choose $\Delta>0$ so that
\begin{equation*}
\Delta<(\lambda(\lambda+1))^{-K}.
\end{equation*}
In view of our assumption (\ref{eqn.uppandlowbnd}), for each $1\le k\le K$ the number of fractions with denominator $n_{1+k}$ which lie in $I_1$ will be at most
\[\left\lfloor|I_1|n_{1+k}\right\rfloor+1=\left\lfloor\frac{\Delta n_{1+k}}{n_1}\right\rfloor+1<\left\lfloor\Delta (\lambda(\lambda+1))^k\right\rfloor+1=1,\]
so our first constraint is taken care of. Now the second constraint will be satisfied provided we choose $N\in\N$ large enough so that
\[\Delta\left(1-\frac{(K+1)}{\lambda^K}\right)>2KN^{-1},\]
which is clearly possible. Finally, one more important point, we want to ensure that we can actually choose
\[I_1\subseteq \left(\frac{b_1+N^{-1}}{n_1},\frac{b_1+1-N^{-1}}{n_1}\right)\]
to be an open interval of length
\[\frac{\Delta}{n_1},\]
so we should also make sure that $N$ has been chosen large enough so that
\[\Delta<1-2N^{-1}.\]
With this choice of parameters, returning to the argument in the previous paragraph completes the proof.
\end{proof}

There is another problem of Erd\H{o}s from 1987 which, at first glance, seems to be quite different.
\begin{problem}\label{prob.erd2}
Suppose $\{n_k\}_{k\in\N}\subseteq\N$ is a lacunary sequence. Does the Cayley graph of $(\Z,+)$ with respect to $\{n_k\}$ have finite chromatic number?
\end{problem}
This problem was solved by Katznelson \cite{Kat2001}, who also pointed out that it is closely related to Problem \ref{prob.erd1}. To see the connection, consider the following lemma.
\begin{lemma}\label{lem.chrnum<->recurrence1}
For any sequence of integers $\A=\{n_k\}$, finite or infinite, if there exists an $\alpha\in\R/\Z$ and an $N\in\N$ such that
\begin{equation}\label{eqn.misszero}
\|n_k\alpha\|>N^{-1}~\text{for all}~k\in\N,
\end{equation}
then
\[\chi(\Z,\A)\le N.\]
\end{lemma}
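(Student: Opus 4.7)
The plan is to use the real number $\alpha$ from the hypothesis to build an explicit proper $N$-coloring of the Cayley graph, via the orbit map $m \mapsto m\alpha \pmod 1$.

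Concretely, I would partition $\R/\Z$ into the $N$ half-open intervals
\[
J_i := \bigl[\,i/N,\ (i+1)/N\,\bigr), \qquad i = 0, 1, \ldots, N-1,
\]
and define a coloring $c : \Z \to \{0, 1, \ldots, N-1\}$ by letting $c(m)$ be the unique index $i$ for which $m\alpha \pmod 1$ lies in $J_i$. This uses at most $N$ colors, so the only thing to check is that $c$ is proper.

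Suppose $m, m' \in \Z$ are adjacent in the Cayley graph. By the definition of the Cayley graph of $(\Z, +)$ with respect to $\A$, this means $m - m' = \pm n_k$ for some $k$. Then
\[
\|(m - m')\alpha\| \;=\; \|\pm n_k \alpha\| \;=\; \|n_k \alpha\| \;>\; N^{-1},
\]
by the hypothesis (\ref{eqn.misszero}). On the other hand, if $c(m) = c(m')$, then $m\alpha$ and $m'\alpha$ lie in a common interval $J_i$ of length $1/N$, which forces $\|(m - m')\alpha\| < N^{-1}$, a contradiction. Hence $c(m) \neq c(m')$, so $c$ is a proper coloring and $\chi(\Z, \A) \le N$.

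There is really no obstacle here; the point of the lemma is just to expose the underlying correspondence, namely that an $\alpha$ whose orbit under the multiplications by $n_k$ stays bounded away from $0$ in $\R/\Z$ automatically yields an $N$-coloring via the obvious partition of the circle. Combining this with Lemma \ref{lem.nondenseorbit} will immediately settle Problem \ref{prob.erd2} affirmatively.
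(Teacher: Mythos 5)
Your proof is correct and follows essentially the same route as the paper: partition $\R/\Z$ into $N$ half-open intervals of length $1/N$, color $m$ by the interval containing $m\alpha \bmod 1$, and use the hypothesis to rule out adjacency of like-colored vertices. The only cosmetic difference is that you argue by contradiction from an adjacent pair with equal colors, while the paper argues contrapositively that equal colors force non-adjacency; the substance is identical.
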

\begin{proof}
Suppose that $\alpha$ and $N$ satisfy the hypotheses of the lemma, and for each $1\le n\le N$ define $B_n\subseteq\R/\Z$ by
\[B_n:=\left[\frac{n-1}{N},\frac{n}{N}\right).\]
The sets $\{B_n\}_{1\le n\le N}$ form a partition of $\R/\Z$.

We define a coloring of the Cayley graph of $\Z$ with respect to $\A$ by the rule that an integer $m$ is assigned color $n$ if and only if $m\alpha~\mathrm{mod}~1\in B_n$. We claim that this is a proper coloring. To see this suppose that $m_1$ and $m_2$ are two integers which are both assigned the same color. Then we have that
\[\|(m_1-m_2)\alpha\|=\|m_1\alpha-m_2\alpha\|\le 1/N,\]
which in light of (\ref{eqn.misszero}) implies that $|m_1-m_2|\not\in\A$. In other words $m_1$ and $m_2$ are not adjacent in the Cayley graph of $\Z$ with respect to $\A$ and our claim, as well as the conclusion of the lemma, are verified.
\end{proof}
Combining Lemma \ref{lem.chrnum<->recurrence1} and Lemma \ref{lem.nondenseorbit} shows that if $\A=\{n_k\}_{k\in\N}$ is lacunary then $\chi(\Z,\A)<\infty$. This yields an affirmative answer to Problem \ref{prob.erd2}.

\section{Going the other direction}\label{sec.chrnum2}
In the previous section we saw an example of how results from Diophantine approximation can be used to produce upper bounds for the chromatic number of Cayley graphs. Now we want to turn this around and see if knowledge of the chromatic number can tell us anything interesting about Diophantine approximation. To begin with, we reformulate Lemma \ref{lem.chrnum<->recurrence1} in the following equivalent form.
\begin{lemma}\label{lem.chrnum<->recurrence2}
For any sequence of integers $\A=\{n_k\}$, finite or infinite, if for some $N\in\N$
\[\chi(\Z,\A)>N,\]
then for any $\alpha\in\R/\Z$, there exists a $k$ such that
\[\|n_k\alpha\|\le N^{-1}.\]
\end{lemma}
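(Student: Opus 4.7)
The plan is to observe that Lemma \ref{lem.chrnum<->recurrence2} is precisely the contrapositive of Lemma \ref{lem.chrnum<->recurrence1}, so no new construction is required. For a fixed $N \in \N$, Lemma \ref{lem.chrnum<->recurrence1} asserts that the hypothesis
\[(\exists\, \alpha \in \R/\Z)\,(\forall\, k)\quad \|n_k \alpha\| > N^{-1}\]
implies $\chi(\Z, \A) \leq N$. Taking the contrapositive with the quantifiers negated gives directly: if $\chi(\Z, \A) > N$, then for every $\alpha \in \R/\Z$ there must exist some $k$ with $\|n_k \alpha\| \leq N^{-1}$, which is exactly the statement to be proved.

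I would write this up as a brief argument by contradiction for clarity. Fix $N \in \N$ and an arbitrary $\alpha \in \R/\Z$, and assume $\chi(\Z, \A) > N$. If no $k$ satisfies $\|n_k \alpha\| \leq N^{-1}$, then $\|n_k \alpha\| > N^{-1}$ for all $k$, so the pair $(\alpha, N)$ satisfies the hypothesis of Lemma \ref{lem.chrnum<->recurrence1}. That lemma then yields $\chi(\Z, \A) \leq N$, contradicting the assumption. Hence the desired $k$ must exist.

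There is no substantive obstacle in this step; it is included as a convenient logical reformulation that sets up the strategy for the remainder of Section \ref{sec.chrnum2}, where the direction of inference between chromatic numbers and Diophantine inequalities is reversed. The real work will come afterwards, when one must identify groups $G$ acting on suitable spaces and subsets $\A \subseteq G$ for which good lower bounds on $\chi(G, \A)$ (via the exhibition of large complete subgraphs, say) can be translated, through an analogue of the argument above, into the Diophantine conclusion of Theorem \ref{thm.mixedlit}.
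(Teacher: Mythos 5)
Your proposal is correct and matches the paper's approach exactly: the paper introduces Lemma \ref{lem.chrnum<->recurrence2} with no separate proof, stating only that it is a reformulation of Lemma \ref{lem.chrnum<->recurrence1} ``in the following equivalent form,'' which is precisely the contrapositive relationship you spell out. Your explicit write-up of the quantifier negation and the contradiction argument is a sound and slightly more detailed version of what the paper leaves implicit.
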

As an application, let $M$ be a positive integer and $\A=\{1,2,\cdots ,M\}.$ In the Cayley graph of $\Z$ with respect to $\A$, if we look at any collection of $M+1$ vertices represented by consecutive integers, they will all be adjacent. In other words the Cayley graph contains complete subgraphs with $M+1$ vertices, which implies that
\[\chi(\Z,\A)\ge M+1.\]
By Lemma \ref{lem.chrnum<->recurrence2} we conclude that for any $\alpha\in\R/\Z$, there is an integer $1\le m\le M$ such that
\[\|m\alpha\|\le M^{-1}.\]
Since $M$ is arbitrary, this is essentially the weaker form of Dirichlet's Theorem which we stated above.

To obtain a more interesting application, we will extend the idea behind Lemma \ref{lem.chrnum<->recurrence2} to a slightly more abstract setting, namely, that of groups acting {\em isometrically} on metric spaces. For a left action of a group $G$ on a metric space $(X,d)$ to be  isometric means that we have $d(gx, gy)=d(x,y)$ for all $x, y\in X$ and for all $g\in G$. For each $N\in\N$ we define $\B(N)$ to be the minimum number of closed balls of diameter $N^{-1}$ needed to cover $X$. Consider the following generalization of Lemma \ref{lem.chrnum<->recurrence2}.
\begin{lemma}\label{lem.chrnum<->recurrence3}
Suppose that $G$ acts isometrically on $(X, d)$ and, as before, let $\mathcal{A}$ be a subset of $G$. Also suppose that $\B (N)$ is finite for all $N$. If, for some $N\in\N$,
\[\chi (G,\A)> \B (N),\]
then for any $x\in X$, there exists an element $g\in\A$ satisfying
\[d(gx,x)\le N^{-1}.\]
\end{lemma}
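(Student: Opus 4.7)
The plan is to mimic the strategy of Lemma \ref{lem.chrnum<->recurrence1}, replacing the partition of $\R/\Z$ into $N$ half-open intervals (which are balls of diameter $N^{-1}$ in the circle metric) with a covering of $X$ by $\B(N)$ closed balls of diameter $N^{-1}$. The proof will proceed by contradiction: assuming no $g \in \A$ satisfies $d(gx,x) \le N^{-1}$, we construct a proper coloring of the Cayley graph using only $\B(N)$ colors, contradicting $\chi(G,\A) > \B(N)$.

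More concretely, fix $x \in X$ and suppose for contradiction that $d(gx,x) > N^{-1}$ for every $g \in \A$. Choose a covering $B_1, \ldots, B_{\B(N)}$ of $X$ by closed balls of diameter $N^{-1}$, and define a coloring $c \colon G \to \{1, \ldots, \B(N)\}$ by letting $c(g)$ be any index $i$ with $gx \in B_i$ (breaking ties arbitrarily).

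To check that $c$ is a proper coloring, suppose $g,h \in G$ are adjacent in the Cayley graph, so without loss of generality $h^{-1}g \in \A$. By the isometry hypothesis,
\[d(gx, hx) = d(h^{-1}gx, x) > N^{-1}\]
by our standing contradiction assumption applied to the element $h^{-1}g \in \A$. On the other hand, if $c(g) = c(h) = i$, then both $gx$ and $hx$ lie in the same ball $B_i$ of diameter $N^{-1}$, forcing $d(gx,hx) \le N^{-1}$. This contradicts the previous inequality, so $c(g) \ne c(h)$, and $c$ is proper. Hence $\chi(G,\A) \le \B(N)$, contradicting the hypothesis.

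The argument is essentially bookkeeping once the right framework is in place, so there is no real obstacle; the only subtle point is making sure the isometry assumption is invoked correctly so that the distance $d(gx,hx)$ can be translated back to $d(h^{-1}gx, x)$, landing us on an element of $\A$ acting on the base point $x$. (One small notational convention to watch: the Cayley graph definition in the paper makes $g$ and $h$ adjacent when either $h^{-1}g$ or $g^{-1}h$ lies in $\A$, so the argument should handle both cases symmetrically, using $d(gx,hx) = d(x, g^{-1}hx)$ in the second case.)
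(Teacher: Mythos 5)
Your proof is correct and follows essentially the same route as the paper: both argue the contrapositive, partition (or cover) $X$ by $\B(N)$ sets of diameter at most $N^{-1}$, color $g$ by the set containing $gx$, and use isometry to pull $d(gx,hx)$ back to $d(h^{-1}gx,x)$. The only cosmetic difference is that the paper explicitly disjointifies the covering into a partition, whereas you break ties arbitrarily, which is equivalent.
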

\begin{proof}
Let us prove the contrapositive, which is the statement that if, for some $N\in\N$, there exists an $x\in X$ such that
\[d(gx,x)>N^{-1}~\text{for all}~g\in\A,\]
then we have that
\[\chi (G,\A)\le \B (N).\]
Suppose the hypothesis of this statement is satisfied and partition $X$ into $\B(N)$ disjoint sets $B_1,\ldots ,B_{\B(N)}$, each of diameter less than or equal to $N^{-1}$. Then color the elements of $G$ by the rule that $g\in G$ has color $1\le n\le \B(N)$ if and only if $gx\in B_n$. This is a proper coloring of the Cayley graph of $G$ with respect to $\A$, since if $g,h\in G$ both have color $n$ then $gx,hx\in B_n$, which implies that
\[d(gx,hx)=d(h^{-1}gx,x)=d(g^{-1}hx,x)\le N^{-1},\]
and therefore, by assumption, that $h^{-1}g,g^{-1}h\not\in\A$.
\end{proof}
In the following subsection, we will show how this lemma can be used to prove Theorem \ref{thm.mixedlit}.
\subsection{Our metric space and group action}
First we describe the metric space and group action which we will use, in conjunction with Lemma \ref{lem.chrnum<->recurrence3}, to prove Theorem \ref{thm.mixedlit}. Fix a prime $p$ and consider the collection $X$ of integer points visible from the origin,
\[X:=\left\{\begin{pmatrix}n_1\\n_2\end{pmatrix}\in\Z^2:\mathrm{gcd}(n_1,n_2)=1\right\},\]
with $d:X\times X\rar\R$ defined by
\[d(\mathbf{m},\mathbf{n}):=|m_1n_2-m_2n_1|_p.\]
For anyone who is already in the know and wants to bypass the rest of this paragraph, this metric space is one-dimensional projective space over $\Q$, with the $p$-adic version of the usual metric. Otherwise, notice that the function $d$ clearly satisfies $d(\mathbf{m},\mathbf{n})\ge 0$, with equality if and only $\mathbf{m}=\mathbf{n}$, and it also satisfies $d(\mathbf{m},\mathbf{n})=d(\mathbf{n},\mathbf{m})$ for all $\mathbf{m},\mathbf{n}\in X$. Therefore to show that $(X,d)$ is a metric space we just need to check that $d$ satisfies the triangle inequality. Suppose that $\mathbf{k},\mathbf{m},$ and $\mathbf{n}$ are in $X$. Since $\mathrm{gcd}(k_1,k_2)=1,$ at least one of $k_1$ and $k_2$ must be coprime to $p$, and we will assume that it is $k_1$ (the argument in the other case is the same). Then $|k_1|_p=1$ and we have that
\begin{align*}
d(\mathbf{m},\mathbf{n})&=|m_1n_2-m_2n_1|_p\\
&=|k_1|_p\cdot |m_1n_2-m_2n_1|_p\\
&=|k_1m_1n_2-k_1m_2n_1|_p.
\end{align*}
Now by adding and subtracting $k_2m_1n_1$ and rearranging, this is equal to
\[|(k_2m_1n_1-k_1m_2n_1)+(k_1m_1n_2-k_2m_1n_1)|_p,\]
which by the triangle inequality for the $p$-adic absolute value (\ref{eqn.p-adictriineq}) is
\begin{align*}
&\le|k_2m_1n_1-k_1m_2n_1|_p+|k_1m_1n_2-k_2m_1n_1|_p\\
&=|n_1|_p\cdot d(\mathbf{m},\mathbf{k})+|m_1|_p\cdot d(\mathbf{k},\mathbf{n}).
\end{align*}
Since each of $|n_1|_p$ and $|m_1|_p$ are at most $1$, the right hand side is bounded above by
\[d(\mathbf{m},\mathbf{k})+d(\mathbf{k},\mathbf{n}),\]
and this proves the triangle inequality for $d$.

Now that we know that $(X,d)$ is a metric space, we would like to find good upper bounds for the quantities $\B (N)$. Choose $N\in\N$ and let $k\ge 0$ be the unique integer satisfying
\begin{equation}\label{eqn.Nbnd}
p^{-k}\le N^{-1}< p^{-k+1}.
\end{equation}
Next, for each integer $1\le \ell\le p^k$ define subsets $A_{1,\ell},A_{2,\ell}\subseteq X$ by
\begin{align*}
A_{1,\ell}&:=\{\mathbf{n}\in X:p\nmid n_1~\text{and}~n_1^{-1}n_2=\ell ~\mathrm{mod}~ p^k\},\quad\text{and}\\
A_{2,\ell}&:=\{\mathbf{n}\in X:p| n_1~\text{and}~n_1n_2^{-1}=\ell ~\mathrm{mod}~ p^k\}.
\end{align*}
Every point in $X$ belongs to at least one of these sets, for some $\ell$. Furthermore if for some $\ell$ we have that $\mathbf{m}$ and $\mathbf{n}$ are both in $A_{1,\ell}$, then
\[m_1^{-1}m_2=n_1^{-1}n_2~\mathrm{mod}~p^k\]
which implies that
\[m_1n_2-m_2n_1=0~\mathrm{mod}~p^k,\]
or in other words that
\[d(\mathbf{m},\mathbf{n})\le p^{-k}.\]
The same conclusion holds if $\mathbf{m}$ and $\mathbf{n}$ are both in $A_{2,\ell}$ for some $\ell$. From each set $A_{1,\ell}$ and from each non-empty set $A_{2,\ell}$ (some of these are actually empty) we choose an element of $X$, and we label the resulting collection of elements $\{\mathbf{m}_i\}_{i=1}^M$. Then it follows that $M\le 2p^k$, and by construction the collection of closed balls
\[\{B(\mathbf{m}_i,p^{-k})\}_{i=1}^M\]
covers all of $X$. Therefore, by virtue of (\ref{eqn.Nbnd}), the collection of balls
\[\{B(\mathbf{m}_i,N^{-1})\}_{i=1}^M\]
also covers $X$, and we conclude from (\ref{eqn.Nbnd}) again that
\begin{equation}\label{eqn.B(N)bnd}
\B (N)\le 2p^k< 2pN.
\end{equation}
It turns out that this bound is best possible, up to determination of the best constant on the right hand side of the inequality, but even without the best constant it is sufficient for our application.

For our group action, let $G=\mathrm{GL}(2,\Z)$ be the multiplicative group of $2\times 2$ matrices with integer coefficients and determinant $\pm 1$. For any $A\in G$ and $\mathbf{m}\in X$, we define $A\mathbf{m}$ to be the vector obtained by multiplying $\mathbf{m}$ on the left by the matrix $A$. This is a natural thing to do, but we must verify that it does indeed define a group action. The vector $A\mathbf{m}$ is clearly another element of $\Z^2$, and to see that is an element of $X$ write $A=(a_{ij})$, so that
\[A\mathbf{m}=\begin{pmatrix}n_1 \\ n_2 \end{pmatrix},\]
with
\[n_1=a_{11}m_1+a_{12}m_2~\text{and}~n_2=a_{21}m_1+a_{22}m_2.\]
Let $e=\mathrm{gcd}(n_1,n_2)$. In order to show that $A\mathbf{m}\in X$, we must show that $e=1$.  Since $e$ divides $n_1$ and $n_2$ it also divides
\[a_{22}n_1-a_{12}n_2=(a_{11}a_{22}-a_{12}a_{21})m_1=\pm m_1,\]
as well as
\[a_{11}n_2-a_{21}n_1=(a_{11}a_{22}-a_{12}a_{21})m_2=\pm m_2.\]
Therefore $e=1$ and $A\mathbf{m}\in X$. The rest of the hypotheses defining a group action (i.e., associativity and identity) follow from basic properties of matrix multiplication, and this verifies that the natural action of $G$ on $X$ does genuinely produce a group action.

Finally, to see that this action is isometric we must show that for any $A\in G$ and for any $\mathbf{m}$ and $\mathbf{n}$ in $X$,
\begin{equation}\label{eqn.isomdef2}
d(\mathbf{m},\mathbf{n})=d(A\mathbf{m},A\mathbf{n}).
\end{equation}
Let us use the notation $\begin{pmatrix}\mathbf{m}&\mathbf{n}\end{pmatrix}$ to denote the $2\times 2$ matrix whose columns are the vectors $\mathbf{m}$ and $\mathbf{n}$. Now observe that, from the definition of $d$,
\[d(\mathbf{m},\mathbf{n})=\left|\mathrm{det}\begin{pmatrix}\mathbf{m}&\mathbf{n}\end{pmatrix}\right|_p.\]
On the other hand,
\[d(A\mathbf{m},A\mathbf{n})=\left|\mathrm{det}\begin{pmatrix}A\mathbf{m}&A\mathbf{n}\end{pmatrix}\right|_p=\left|\mathrm{det}\left(A\cdot\begin{pmatrix}\mathbf{m}&\mathbf{n}\end{pmatrix}\right)\right|_p.\]
Since the determinant is multiplicative and since $A\in G$ (and thus has determinant $\pm 1$), these two equations combine to prove (\ref{eqn.isomdef2}) and therefore that $G$ acts isometrically on $X$.

\subsection{Proof of Theorem \ref{thm.mixedlit}}
Now we are ready for the proof of the main theorem. We may assume without loss of generality that $\alpha\in[0,1)$, and we write $\alpha=[b_1,\ldots ,b_r,\overline{a_1,\ldots ,a_s}]$ for its continued fraction expansion. Using the notation of (\ref{eqn.cf1}), define $A,B\in G$ by
\[A=A_{r+s}\cdots A_{r+1}~\text{ and }~B=A_r\cdots A_1.\]
For each $N\in\N$ let $\A_N\subseteq G$ be the set
\[\A_N=\{B^{-1}A^iB:1\le i\le N\},\]
and note that the restriction to $\A_N$ of the Cayley graph of $G$ with respect to $\A_N$ is a complete graph on $N$ vertices. Therefore we must have that $\chi (G,\A_N)\ge N> \B(N/2p)$ by (\ref{eqn.B(N)bnd}). It follows from Lemma \ref{lem.chrnum<->recurrence3} that, for some $1\le i\le N$,
\[d\left(B^{-1}A^{i}B\genfrac{(}{)}{0pt}{}{0}{1},\genfrac{(}{)}{0pt}{}{0}{1}\right)\le\frac{2p}{N}.\]
Restating this in light of the connection to continued fractions (equation (\ref{eqn.cf2})), for any $N\in\N$, there exists an integer $i$ with $1\le i\le N$, such that
\begin{equation}\label{eqn.n_Ninequality}
|b_{22}q_{is+r-1}-b_{12}q_{is+r}|_p\le \frac{2p}{N}.
\end{equation}
Now for each $N$ let $i$ be such an integer and set
\[n_N=|b_{22}q_{is+r-1}-b_{12}q_{is+r}|.\]
Note that $b_{22}$ and $b_{12}$ are coprime, as are $q_{is+r-1}$ and $q_{is+r}$, hence (using the easily verified fact that the difference between denominators of consecutive convergents must tend to infinity) we have that $n_N\not=0$ for $N$ large enough. Since the right hand side of (\ref{eqn.n_Ninequality}) tends to $0$ as $N\rar\infty$, we must also have that $n_N\rar\infty$.

Next let $b=\max (|b_{22}|,|b_{12}|)$. Then, using (\ref{eqn.cf0}), we have that
\begin{align*}
n_N\|n_N\alpha\|&\le 2bq_{is+r}\|n_N\alpha\|\\
&\le 2bq_{is+r}\left(|b_{22}|\cdot\|q_{is+r-1}\alpha\|+|b_{12}|\cdot\|q_{is+r}\alpha\|\right)\\
&\le 4b^2.
\end{align*}
Finally, since the partial quotients in the continued fraction expansion of $\alpha$ are bounded, there exists a constant $C>0$ such that
\[n_N\le C^N~\text{ for all }~N\in\N.\]
These observations together imply that
\[n_N|n_N|_p\|n_N\alpha\|\log n_N\le 8b^2p\log C~\text{ for all }~N\in\N,\]
which verifies the statement of the theorem.

\section{Directions for further research}\label{sec.furtherdirecs}
In this paper we have aimed to present a collection of ideas and results which highlight connections between Diophantine approximation, dynamical systems, and chromatic number. We hope that the reader has enjoyed these connections as much as we have, and we would like to conclude with several open questions for further research:
\begin{enumerate}
\item[(i)] Can the chromatic number proof of Theorem \ref{thm.mixedlit} be extended to accommodate numbers $\alpha$ which are not quadratic irrationals?
\item[(ii)] Can the idea of the proof be used to prove the Littlewood Conjecture for the case when $\alpha$ and $\beta$ are (linearly independent) quadratic irrationals?
\item[(iii)] More modestly, can the idea of the proof be used to prove that there is an $k\ge 2$ such that for any collection $\alpha_1,\ldots ,\alpha_k$ of quadratic irrationals,
    \[\inf_{n\in\N}n\|n\alpha_1\|\cdots\|n\alpha_k\|=0?\]
\end{enumerate}
Questions (ii) and (iii) are particularly attractive, and it is perhaps instructive to return to the setup and proof of Theorem \ref{thm.mixedlit} above to see where it breaks down and what one might do to fix it.

\end{document}